\documentclass[a4paper, 12pt]{article}
\usepackage{amsmath,amsthm, amssymb, esint}

\newtheorem{theorem}{Theorem}

\theoremstyle{definition}

\theoremstyle{plain}
\newtheorem{thm}{Theorem}[section]
\newtheorem{lemma}[thm]{Lemma}
\newtheorem{cor}[thm]{Corollary}
\newtheorem{fact}[thm]{Fact}
\newtheorem{remark}[thm]{Remark}

\title{Uniform estimates near the initial state for solutions to the two-phase parabolic problem 
\thanks{This work was supported by the Russian Foundation of Basic Research (RFBR) through the grant number
11-01-00825 and by the Russian Federal Target Program 2010-1.1-111-128-033.}
}
\author{D.E.\,Apushkinskaya and N.N.\,Uraltseva \thanks{The second author
thanks, for hospitality and support, the Alexander von Humboldt
Foundation and Saarland University where this work was done.}}

\begin{document}
\maketitle
\begin{abstract}
This paper is devoted to a proof of optimal regularity, near the initial state, for weak solutions to the two-phase parabolic obstacle problem. The approach used here is general enough to allow us to consider the initial data belonging to the class $C^{1,1}$.
\end{abstract}
\bibliographystyle{alpha}
\section{Introduction.}
We consider a weak solution of the two-phase parabolic obstacle problem
\begin{align}
H[u]&=f(u)=\lambda^+\chi_{\{u>0\}}-\lambda^-\chi_{\{u<0\}} \quad \text{in } \quad \mathcal{C}_{10}=B_{10}\times ]0,1],  \label{main-equation} \\
u&=\varphi \quad \text{on} \quad B_{10}, \label{initial-condition}
\end{align}
and $u$ satisfies some boundary conditions on the lateral surface of the cylinder $\mathcal{C}_{10}$. 
Here $H[u]=\Delta u-\partial_t u$ is the heat operator, $\lambda^{\pm}$ are non-negative constants such that $\lambda^++\lambda^->0$, $\chi_E$ is the characteristic function of the set $E$, and  $B_{10}=\{x: |x|<10 \}$. Observe that equation (\ref{main-equation}) is understood in distributional sence. We suppose that a given function $\varphi$ satisfies
\begin{equation} \label{condition-on-varphi}
\varphi \in C^{1,1}\left( B_{10}\right) .
\end{equation}
We suppose also that $\sup\limits_{\mathcal{C}_{10} }|u|\leqslant M$ with $M\geqslant 1$.

\vspace{0.2cm}
For a $C^{1}_x \cap C^0_t$-function $u$ defined in $\mathcal{C}_{10}$ we introduce the following sets:
\begin{align*}
\Omega^{\pm}(u)&=\left\lbrace (x,t)\in \mathcal{C}_{10}: \pm u(x,t)>0\right\rbrace \\
\Lambda (u)&=\left\lbrace (x,t)\in \mathcal{C}_{10}: u(x,t)=|Du(x,t)|=0\right\rbrace \\
\Gamma (u)&=\partial \left\lbrace (x,t) \in \mathcal{C}_{10}: u(x,t) \neq 0\right\rbrace  \cap\, \mathcal{C}_{10} \quad  \text{is the free boundary}.
\end{align*}

We emphasize that in the two-phase case we do not have the property that the gradient vanishes on the free boundary, as it was in the classical one-phase case; this causes difficulties. Therefore, we will distinguish the following parts of $\Gamma$:
$$
\Gamma^{0}(u)=\Gamma (u) \cap \Lambda (u), \qquad \Gamma^{*}(u)=\Gamma (u) \setminus \Gamma^{0} (u).
$$
From \cite{SUW09} it follows that in some suitable rotated coordinate system in $\mathbb{R}^n$ the set $\Gamma^* (u)$ can be locally described as $x_1=f(x_2,\dots,x_n,t)$ with $f\in C^{1, \alpha}$ for any $\alpha <1$.

\vspace{0.2cm} 
\subsection{Background and main result.} 
In this paper we are interested in uniform $L^{\infty}$-estimates near the initial state for the derivatives $D^2u$ and $\partial_t u$ of the function $u$ satisfying (\ref{main-equation})-(\ref{initial-condition}). 

Relative interior estimates were obtained in \cite{SUW09}. The corresponding estimates up to the lateral surface were proved in \cite{U2007} for zero Dirichlet data, and in \cite{AU09} for general Dirichlet data satisfying certain structure conditions, respectively. Unfortunately the proofs presented in \cite{SUW09}, \cite{U2007} and \cite{AU09} do not work near the initial state. To this end, the additional investigation of the behaviour of the solution $u$ close to the initial state is required. 

Speaking about regularity up to the initial state,  we are only aware of the results of \cite{U2007}, \cite{NyPaPo2010}, \cite{S2008} and \cite{Ny08}. In the papers \cite{Ny08} and \cite{S2008} the authors studied the parabolic obstacle problem near the initial state for quasilinear and fully nonlinear equations, respectively. In both cases, the estimates of the second derivatives $D^2u$ were not considered, and hence, only the gradient $Du$ and the time derivative $\partial_t u$ were estimated. The results in \cite{NyPaPo2010} are most close to those obtained here. Indeed, the authors of \cite{NyPaPo2010} considered the parabolic obstacle problem with more general differential operator of Kolmogorov type and established the $L^{\infty}$-estimates of $D^2u$ and $\partial_t u$ under the assumption that the initial data $\varphi$ belongs to the space $C^{2, \alpha}$. 
It remains only to note that the two-phase parabolic problem with the initial data $\varphi \in C^{2,\alpha}$ was studied in \cite{U2007} under the additional structure assumption that $\varphi$ vanishes together with its gradient.

\vspace{0.2cm}
Now we formulate the main result of the paper.

\begin{theorem} \label{main-theorem}
Let $u$ be a weak solution of (\ref{main-equation})-(\ref{initial-condition}) with a function $\varphi$ satisfying the assumption (\ref{condition-on-varphi}). Suppose also that $\sup\limits_{\mathcal{C}_{10}}|u| \leqslant M$.

Then there exists a positive constant $c$ completely defined by $n$, $M$, $\lambda^{\pm}$, and $\varphi$ such that
$$
\text{ess}\, \sup\limits_{\mathcal{C}_{1}}\left\lbrace |D^2u|+|\partial_tu|\right\rbrace \leqslant c.
$$
\end{theorem}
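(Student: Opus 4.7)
The plan is to combine the interior estimates of \cite{SUW09} (which handle $\{t\ge\tau_0\}$) and the lateral-boundary estimates of \cite{U2007,AU09} with a local analysis in a parabolic half-neighborhood of each initial point $(x_0,0)$, $x_0\in\overline{B_1}$. If $\varphi(x_0)\neq 0$, continuity of $\varphi$ together with a standard barrier comparison shows that $u$ retains the sign of $\varphi(x_0)$ on $B_{\tau_0}(x_0)\times(0,\tau_0^2]$ for a universal $\tau_0>0$, so \eqref{main-equation} reduces there to a linear heat equation with constant right-hand side; classical parabolic $W^{2,p}$/Schauder theory up to the initial surface, applied with $\varphi\in C^{1,1}$, delivers the bound.

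For the delicate case $\varphi(x_0)=0$, I would run a Caffarelli-type scaling-and-contradiction argument. Denote by $\ell_y(x)=\varphi(y)+D\varphi(y)\cdot(x-y)$ the first-order Taylor polynomial of $\varphi$ at $y$, and put $Q_r^+(y,0)=B_r(y)\times(0,r^2]$. Assume the quadratic growth bound $|u-\ell_{x_0}|\le Cr^2$ on $Q_r^+(x_0,0)$ fails uniformly over $x_0\in\overline{B_1}\cap\{\varphi=0\}$; then there exist sequences $x_k\in\overline{B_1}$ with $\varphi(x_k)=0$ and $r_k\to 0$, chosen to almost-maximize the quotient $\mu_k(r):=r^{-2}\sup_{Q_r^+(x_k,0)}|u-\ell_{x_k}|$ on $[r_k,\tfrac12]$, such that $M_k:=\mu_k(r_k)\to\infty$. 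Rescale by
\[
\tilde u^k(y,s)=\frac{u(x_k+r_k y,\,r_k^2 s)-\ell_{x_k}(x_k+r_k y)}{r_k^2\,M_k}.
\]
Near-maximality of $r_k$ yields the doubling bound $\sup_{Q_R^+}|\tilde u^k|\le 2R^2$ for $R\le 1/(2r_k)$; the hypothesis $\varphi\in C^{1,1}$ forces $\tilde u^k(\cdot,0)\to 0$ locally uniformly, since the quadratic remainder $\varphi-\ell_{x_k}$ is $O(r_k^2)$ and is absorbed by the factor $r_k^2 M_k$; and $H[\tilde u^k]=M_k^{-1}f(u)\to 0$ in $L^\infty$.

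Extracting a subsequence by parabolic $C^{1,\alpha}$ compactness up to $\{s=0\}$, pass to a limit $\tilde u_\infty$ which is caloric on $\mathbb R^n\times(0,\infty)$, has vanishing initial trace, and satisfies $|\tilde u_\infty(y,s)|\le 2(|y|^2+s)$. Tychonoff-type uniqueness for the Cauchy problem in this growth class forces $\tilde u_\infty\equiv 0$, contradicting the normalization $\|\tilde u_\infty\|_{L^\infty(Q_1^+)}=1$. The resulting quadratic growth estimate $|u-\ell_{x_0}|\le Cr^2$ on $Q_r^+(x_0,0)$, combined with the $L^\infty$-bound on $H[u]=f(u)$, upgrades to the pointwise bound on $|D^2u|+|\partial_t u|$ by standard parabolic $W^{2,p}$ estimates. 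The main obstacle is securing the parabolic $C^{1,\alpha}$ compactness of the $\tilde u^k$ up to the initial hyperplane with constants independent of $k$; this is exactly where the $C^{1,1}$ regularity of $\varphi$ is indispensable. A subsidiary subtlety is controlling the free boundary $\Gamma^*(u)$ of the original solution as it meets $\{t=0\}$, so that the $C^{1,\alpha}$ description from \cite{SUW09} survives in the rescaled picture and no pathology of the coincidence set obstructs the passage to the limit.
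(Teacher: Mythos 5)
Your proposal takes a genuinely different route from the paper. The paper first bounds $\partial_t u$ by regularizing $f$ and $\varphi$ and exploiting the sub-caloricity of $(\partial_t u^\varepsilon)_\pm$, and then bounds $D^2u$ directly through Caffarelli's \emph{local monotonicity formula} (Fact~\ref{monotonicity formula} and Remark~\ref{rescaled-monotonicity-formula}), applied only up to the scale $R=\sqrt{t^0}$ at which Lemmas~\ref{estimates-Du-Dphi}--\ref{estimate-of-L_2norm-D_eu} give explicit bounds from the initial data. You instead propose a blow-up/compactness argument \`a la Caffarelli, classifying the limit via Tychonoff uniqueness. This is a legitimate alternative strategy (closer in spirit to the interior analysis of \cite{SUW09}), but as written it has two concrete gaps.

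First, the dichotomy ``$\varphi(x_0)\ne 0$'' versus ``$\varphi(x_0)=0$'' is not uniform. The sign-persistence radius $\tau_0$ in your Case 1 necessarily depends on $|\varphi(x_0)|$ and degenerates as $\varphi(x_0)\to 0$; it is not ``universal'' as you assert. Thus points $(x_0,t_0)$ with $\varphi(x_0)$ small but nonzero are in a no-man's land between the two cases: Case~1's cylinder is too small to reach $t_0$, and Case~2 does not literally apply. The fix is to drop the dichotomy and run the blow-up around \emph{every} $x_0\in\overline{B_1}$: the $C^{1,1}$ Taylor-remainder bound $|\varphi-\ell_{x_0}|\le C|x-x_0|^2$ is available regardless of whether $\varphi(x_0)$ vanishes, and $H[\tilde u^k]=M_k^{-1}f(u)\to 0$ uses only $|f|\le\lambda^++\lambda^-$, not any sign information. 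The contradiction sequence, if it exists, will then automatically accumulate near $\{\varphi=0\}$ since classical Schauder theory rules out large $\mu_k$ when $\varphi(x_k)$ stays bounded away from zero.

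Second, the last step --- ``the resulting quadratic growth estimate $|u-\ell_{x_0}|\le Cr^2$ on $Q_r^+(x_0,0)$, combined with the $L^\infty$-bound on $H[u]=f(u)$, upgrades to the pointwise bound on $|D^2u|+|\partial_t u|$ by standard parabolic $W^{2,p}$ estimates'' --- is not correct as stated. Parabolic $W^{2,p}$ theory with $p<\infty$ cannot yield $D^2u\in L^\infty$, and the $W^{2,\infty}$ estimate for the heat equation with merely bounded right-hand side is false. What is actually needed is the standard two-step covering argument: quadratic separation both from $\{t=0\}$ (your blow-up) \emph{and} from interior free-boundary points (the estimates of \cite{SUW09}), combined with interior Schauder/Calder\'on--Zygmund estimates in the ``good'' region $\{u\ne 0\}$ where $f(u)$ is locally constant. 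This is precisely the passage where the structure of $\Gamma^*(u)$ and of $\Lambda(u)$ enters; the paper's proof sidesteps it entirely by obtaining the pointwise bound $|D(D_e u)(z^0)|^4\lesssim \lim_{r\to 0}\Phi_e(r)$ directly from the monotonicity functional, without any deduction from quadratic growth. Your flagged ``main obstacle'' (uniform $C^{1,\alpha}$-compactness of $\tilde u^k$ up to $\{s=0\}$) is, by contrast, resolvable by standard parabolic $W^{2,p}$ estimates up to the initial hyperplane since the rescaled initial data has $C^{1,1}$-seminorm $O(M_k^{-1})$, and your ``subsidiary subtlety'' about $\Gamma^*(u)$ does not affect the blow-up step at all --- it matters only at the upgrade step you underspecified.
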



\begin{remark}
The result of Theorem~\ref{main-theorem} is optimal in the sense that we require from the initial function $\varphi$ as much regularity as we want to prove for the solution.
\end{remark}

\begin{remark}
The cylinder $\mathcal{C}_{10}$ is chosen only for simplicity. In fact, the problem (\ref{main-equation})-(\ref{initial-condition}) can be treated in $\mathcal{C}_{1+\delta}$ for arbitrary $\delta >0$. In this case, the constant $c$ in Theorem~\ref{main-theorem} will also depend on $\delta$.
\end{remark}

\begin{remark}
From Theorem~\ref{main-theorem} and the well-known interpolation theory it follows that $Du \in C^{1,1/2}_{x,t}\left( \mathcal{C}_1\right) $.
\end{remark}

The main strategy used in the present paper follows. At first we prove the estimate of the time derivative. We do this in \S 2 with the help of regularizations. The next step is to obtain the estimates of the second derivatives. The analysis of the second derivatives in \S 3 is based essentially on the famous local monotonicity formula due to L.~Caffarelli.


\subsection{Notation.}
Throughout this paper we use the following notation:

\noindent $z=(x,t)$ are points in ${\mathbb R}^{n+1}$, where $x \in
{\mathbb R}^n$ and $t \in {\mathbb R}^1$;

\noindent $|x|$ is the Euclidean norm of $x$ in $\mathbb{R}^n$;

\noindent $\chi _{E }$ denotes the characteristic function of the
set $E\subset {\mathbb R}^{n+1}$;

\noindent $v_+=\max{\{v,0\}}$; \qquad $v_-=\max{\{-v,0\}}$;

\noindent $B_r(x^0)$ denotes the open ball in ${\mathbb R}^n$ with
center $x^0$ and radius $r$; 

\noindent $B_r=B_r(0)$;

\noindent $\mathcal{C}_{r}=B_r \times ]0,1]$;

\noindent $\partial' \mathcal{C}_{r}$ is the parabolic boundary, i.e., the topological boundary minus the top of the cylinder;


\noindent $Q_r(z^0)=Q_r(x^0,t^0)=B_r(x^0) \times ]t^0-r^2,t^0]$. Since our main interest are the estimates near the initial state, the radius $r$ in $Q_r(z^0)$ will be always chosen such that $t^0-r^2=0$.

\noindent $D_i$ denotes the differential operator with respect to
$x_i$; $\partial_t=\frac{\partial}{\partial t}$;

\noindent $D=(D_1,D_2,\dots , D_n)$ denotes the spatial
gradient;

\noindent $D^2u=D(Du)$ denotes the Hessian of $u$;

\noindent
$D_{\nu}$ stands for the operator of differentiation along the direction $\nu\in \mathbb{R}^{n}$, i.e., $|\nu|=1$ and
$$
D_{\nu}u=\sum\limits_{i=1}^n
\nu_iD_i u.
$$

\vspace{0.2cm} \noindent $\|\cdot\|_{p,\,E}$ denotes the norm in
$L^p(E)$, $1<p \leqslant \infty$;

\vspace{0.2cm} \noindent $\fint\limits_{E}\dots$ stands for the average integral over the set $E$, i.e.,
$$
\fint\limits_{E} \dots =\frac{1}{\text{meas}\,\left\lbrace E\right\rbrace }\int\limits_{E} \dots ;
$$

\noindent $\xi=\xi (|x|)$ stands for a time-independent cut-off function belonging $C^2(B_{2})$, having support in $B_{2}$, and satisfying $\xi \equiv 1$ in $B_1$.

\vspace{0.2cm}
\noindent $\xi_{r,x^0}(x)=\xi\left( \dfrac{|x-x^0|}{r}\right)$. It is clear that in the annular $B_{2r}(x^0) \setminus B_{r} (x^0)$ the function $\xi_{r,x^0}$ satisfies the inequalities
\begin{equation} \label{estimates-for-zeta}
|D\xi_{r,x^0} (x)| \leqslant c(n)r^{-1}, \qquad |\Delta \xi_{r,x^0} (x)| \leqslant c(n)r^{-2}.
\end{equation}

For future reference, we introduce the fundamental solution 
\begin{equation} \label{heat kernel}
G(x,t)=\left\lbrace 
\begin{aligned}
\frac{\exp \left( -|x|^2/4t\right) }{(4\pi t)^{n/2}}, \quad &\text{for}\quad t>0,\\
0, \qquad \qquad \quad &\text{for}\quad t \leqslant 0
\end{aligned} \right.
\end{equation}
to the heat equation.

We use letters $M$, $N$, and $c$ (with or without indices) to
denote various constants. To indicate that, say, $c$ depends on some
parameters, we list them in the parentheses: $c(\dots)$. We will
write $c(\varphi)$ to indicate that $c$ is defined by the sum $\|D^2\varphi \|_{\infty, B_{10}}+\|\varphi \|_{\infty, B_{10}}$.



\subsection{Useful facts}
For the readers convenience and for future references we recall and explain some facts.



\begin{fact} \label{sub-caloricity}
Let $u$ be a solution of Equation (\ref{main-equation}), and let $e$ be a direction in $\mathbb{R}^n$. Then
\begin{equation} \label{D_eu-subcaloric}
H \big[\left( D_eu\right)_{\pm}\big] \geqslant 0 \quad \text{in} \quad \mathcal{C}_{10},
\end{equation}
where the inequalities (\ref{D_eu-subcaloric}) are understood in the sense of distributions.
\end{fact}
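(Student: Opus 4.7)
The plan is to argue by approximation of the discontinuous nonlinearity $f$ by smooth monotone ones and then pass to the limit. Let $\rho_\varepsilon$ be a standard non-negative mollifier and set $f_\varepsilon := f * \rho_\varepsilon$ (as a function of the real variable). Because $f$ is non-decreasing on $\mathbb{R}$, so is each $f_\varepsilon$; in particular $f'_\varepsilon \geq 0$, and $\|f_\varepsilon\|_\infty \leq \max(\lambda^+,\lambda^-)$. Let $u_\varepsilon$ be the classical solution of $H[u_\varepsilon] = f_\varepsilon(u_\varepsilon)$ in $\mathcal{C}_{10}$ with the same initial and boundary data as $u$. Standard parabolic theory yields $u_\varepsilon \in C^{2,\alpha}_{x,t}$ locally, and uniform-in-$\varepsilon$ $L^\infty$- and $C^{1,\alpha}$-estimates allow us, along a subsequence, to pass $u_\varepsilon \to u$ and $Du_\varepsilon \to Du$ in $L^1_{\mathrm{loc}}(\mathcal{C}_{10})$.

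Differentiating the regularized equation in the direction $e$ in the smooth setting gives
\[
H[D_e u_\varepsilon] \;=\; f'_\varepsilon(u_\varepsilon)\, D_e u_\varepsilon,
\]
and the monotonicity $f'_\varepsilon \geq 0$ forces this expression to share the sign of $D_e u_\varepsilon$. Now invoke the parabolic analogue of Kato's inequality: for sufficiently regular $v$,
\[
H[v_+] \;\geq\; \chi_{\{v>0\}}\, H[v]
\]
holds distributionally, since $\nabla v_+ = \chi_{\{v>0\}}\nabla v$ and $\partial_t v_+ = \chi_{\{v>0\}}\partial_t v$ a.e., so $\Delta v_+$ picks up an extra non-negative surface contribution on $\{v=0\}$. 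Applying this with $v = D_e u_\varepsilon$ and combining with the previous display yields
\[
H\bigl[(D_e u_\varepsilon)_+\bigr] \;\geq\; \chi_{\{D_e u_\varepsilon > 0\}}\, f'_\varepsilon(u_\varepsilon)\, D_e u_\varepsilon \;\geq\; 0.
\]

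The final step is the passage to the limit. Testing the last inequality against an arbitrary non-negative $\eta \in C^\infty_0(\mathcal{C}_{10})$ and writing it in adjoint form $\int (D_e u_\varepsilon)_+\, H^*[\eta]\, dz \geq 0$ with $H^* = \Delta + \partial_t$, the $L^1_{\mathrm{loc}}$-convergence $Du_\varepsilon \to Du$ gives $(D_e u_\varepsilon)_+ \to (D_e u)_+$ in $L^1_{\mathrm{loc}}$, so the inequality persists and delivers $H[(D_e u)_+] \geq 0$ in $\mathcal{C}_{10}$. The assertion for $(D_e u)_-$ follows by replacing $e$ with $-e$. I expect the main technical point to be the uniform-in-$\varepsilon$ parabolic $C^{1,\alpha}$ bound on $u_\varepsilon$ that justifies the convergence $Du_\varepsilon \to Du$; this is standard given $\|f_\varepsilon\|_\infty \leq \lambda^+ + \lambda^-$ and the assumed $C^{1,1}$ regularity of $\varphi$.
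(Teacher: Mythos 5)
Your regularization argument is the right approach and is essentially the strategy behind the cited reference \cite{SUW09} (the paper itself supplies no proof beyond that citation). The individual steps are sound: convolution with a non-negative kernel preserves the monotonicity of $f$; differentiating the regularized equation gives $H[D_eu_\varepsilon]=f'_\varepsilon(u_\varepsilon)\,D_eu_\varepsilon$; the parabolic Kato inequality $H[v_+]\geqslant\chi_{\{v>0\}}H[v]$ is correct for Lipschitz $v$, since $\partial_t v_+=\chi_{\{v>0\}}\partial_t v$ and $Dv_+=\chi_{\{v>0\}}Dv$ hold a.e.\ (and hence as locally integrable distributional derivatives) while $\Delta v_+$ acquires only a non-negative singular part on $\{v=0\}$; and the Lipschitz continuity of $s\mapsto s_+$ lets you pass to the limit in the adjoint pairing $\int (D_eu_\varepsilon)_+\,H^*[\eta]\,dz\geqslant 0$.

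The one step you state but do not justify is that the compactness limit of $u_\varepsilon$ is the \emph{given} solution $u$. The uniform $L^\infty$ and interior $C^{1,\alpha}$ bounds only produce a subsequential limit $\bar u$ solving (\ref{main-equation}) with the same parabolic boundary data; the inequality obtained in the limit is then $H\big[(D_e\bar u)_\pm\big]\geqslant 0$, which is not the assertion unless $\bar u=u$. Identifying $\bar u$ with $u$ requires either a uniqueness theorem for the two-phase problem or the quantitative comparison $\sup|u_\varepsilon-u|\leqslant\varepsilon$. Both rest on the same mechanism — monotonicity of $f$ combined with the fact that a compactly supported mollifier leaves $f_\varepsilon\equiv f$ outside $[-\varepsilon,\varepsilon]$ — and the comparison is proved in this very paper as Lemma~\ref{uniform-convergence} (on the slightly smaller cylinder $\mathcal{C}_9$, which is harmless once you allow yourself to work in $\mathcal{C}_{10-\delta}$ for arbitrary $\delta>0$). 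Adding that observation closes the argument.
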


\begin{proof}
The proof of this assertion can be found in  \cite{SUW09} (see also \cite{AU09}).
\end{proof}

We denote
$$
I(r, v, z^0)=\int\limits_{t^0-r^2}^{t^0}\int\limits_{\mathbb{R}^n}|Dv(x,t)|^2G(x-x^0,t^0-t)dxdt,
$$
where $r\in ]0,1]$, $z^0=(x^0,t^0)$ is a point in $\mathbb{R}^{n+1}$, a function $v$ is defined in the strip $\mathbb{R}^n \times [t^0-1,t^0]$ , and the heat kernel $G(x,t)$ is defined by (\ref{heat kernel}).

To prove the main theorem, we need the following monotonicity formula for pairs of disjointly supported subsolutions of the heat equation.
\begin{fact}\label{monotonicity formula}
Let $\xi:=\xi (|x|)$ be a standard time-independent cut-off function (see Notation), and  
let $h_1$, $h_2$ be nonnegative, sub-caloric and continuous functions in $\mathcal{C}_{2}$, satisfying
$$
h_1(0,1)=h_2(0,1)=0, \qquad h_1(x,t)\cdot h_2(x,t)=0 \quad \text{in}\quad {\mathcal{C}}_{2}.
$$

Then, for $0<r\leqslant 1$  the functional
$$
\begin{aligned}
\Psi (r)=\Psi(r, h_1, h_2, \xi, 0,1 )=&\frac{1}{r^4} I(r, \xi h_1,0,1) I(r, \xi h_2, 0,1)
\end{aligned}
$$
satisfies the inequality
\begin{equation} \label{local-monotonicity-in-B_1}
\Psi (r) \leqslant \Psi (1) + N(n) \|h_1\|^2_{2, {\mathcal{C}}_{2}} \|h_2\|^2_{2, {\mathcal{C}}_{2}}.
\end{equation}
\end{fact}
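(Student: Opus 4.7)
The plan is to reduce the local statement to the classical parabolic ACF monotonicity formula by viewing $w_i := \xi h_i$ as globally defined nonnegative functions (extended by zero outside $B_2$), and then carefully tracking the defect from subcaloricity introduced by the cut-off. Since $\xi \geq 0$ and $H[h_i]\geq 0$ in $\mathcal{C}_2$, one has
$$
H[w_i] = \xi H[h_i] + 2D\xi\cdot Dh_i + h_i\Delta\xi \geq -g_i,\qquad g_i := -2D\xi\cdot Dh_i - h_i\Delta\xi,
$$
with $g_i$ supported in the annulus $A := B_2\setminus B_1$ and coefficients controlled by (\ref{estimates-for-zeta}). Thus the $w_i$ are genuinely nonnegative subcaloric functions away from $A$, their product still vanishes identically, and both are zero at $(0,1)$.

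Next I would differentiate $\Psi$ and rearrange in the standard ACF way,
$$
\Psi'(r) = \Psi(r)\left(\frac{I_1'(r)}{I_1(r)} + \frac{I_2'(r)}{I_2(r)} - \frac{4}{r}\right),\qquad I_i(r):=I(r,w_i,(0,1)),
$$
and apply the Friedland--Hayman-type Gaussian spectral inequality that yields $I_1'/I_1 + I_2'/I_2 \geq 4/r$ for genuinely subcaloric, disjointly supported, nonnegative functions. Repeating the integration-by-parts derivation with $H[w_i] \geq -g_i$ instead of $H[w_i]\geq 0$ introduces correction terms of the form $\int_{1-r^2}^1\int_{A} w_i\, g_i\, G(x,1-t)\,dx\,dt$. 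For $x\in A$ and $1-t\leq r^2\leq 1$ the heat kernel satisfies $G(x,1-t)\leq (4\pi(1-t))^{-n/2}\exp(-1/(4r^2))$, so after Cauchy--Schwarz and the pointwise bound $|g_i|\leq c(n)(|Dh_i|+|h_i|)$ on $A$ these correction terms are dominated by $c(n,r)\exp(-1/(8r^2))\|h_1\|_{2,\mathcal{C}_2}\|h_2\|_{2,\mathcal{C}_2}$ with $c(n,r)$ polynomially bounded in $1/r$.

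Putting these pieces together yields a differential inequality $\Psi'(r) \geq -F(r)$ where $F(r)\leq c(n)\,r^{-k}\exp(-1/(8r^2))\,\|h_1\|^2_{2,\mathcal{C}_2}\|h_2\|^2_{2,\mathcal{C}_2}$ is integrable on $(0,1]$ with $\int_0^1 F \leq N(n)\|h_1\|^2_{2,\mathcal{C}_2}\|h_2\|^2_{2,\mathcal{C}_2}$; integrating from $r$ up to $1$ then gives exactly (\ref{local-monotonicity-in-B_1}). The main obstacle I anticipate is the spectral step: the Friedland--Hayman inequality is traditionally stated for truly subcaloric functions, and one must reorganize the integration by parts so that the genuine subcaloric contribution (driving the $4/r$ gain on the log-derivative) is cleanly separated from the annular defect coming from $g_i$, ensuring that the latter carries only an exponentially decaying prefactor that absorbs the polynomial $r$-dependence. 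Once this decomposition is in place, the rest is essentially bookkeeping of Gaussian moments on $A$.
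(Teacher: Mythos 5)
The paper does not give an independent proof of this fact: it is established by citation to Theorem~1.1.4 of Caffarelli--Kenig (and Theorem~12.12 of Caffarelli--Salsa), so your sketch is being compared against the argument of that reference. Your overall strategy---cut off $h_i$ by $\xi$, compute $H[\xi h_i]=\xi H[h_i]+2D\xi\cdot Dh_i+h_i\Delta\xi$, note the defect is supported in the annulus $A=B_2\setminus B_1$, and exploit the exponential smallness of the backward heat kernel on $A$ for small backward times---is exactly the mechanism behind the cited local monotonicity formula, so you are on the right track.

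There are, however, a few genuine gaps in the bookkeeping that you would need to close. First, the annular correction $\int\!\!\int_A w_i\,g_i\,G$ involves $D h_i$ through $g_i$, so Cauchy--Schwarz yields a factor $\|Dh_i\|_{2,A\times I}$ rather than $\|h_i\|_{2,\mathcal{C}_2}$; you must insert a Caccioppoli energy estimate for the subcaloric $h_i$ (valid since $h_i\geq 0$, $H[h_i]\geq 0$) to convert $\|Dh_i\|_{2,A\times I}$ into $\|h_i\|_{2,\mathcal{C}_2}$. Second, your intermediate claim that the corrections are dominated by $c(n,r)e^{-1/(8r^2)}\|h_1\|_{2,\mathcal{C}_2}\|h_2\|_{2,\mathcal{C}_2}$ cannot be right as stated: the defect for $w_1$ depends only on $h_1$, and the mixed product arises only after multiplying by $I_2(r)$ in the product rule; consistently carrying this through produces the needed $\|h_1\|_{2}^2\|h_2\|_{2}^2$, but requires also a uniform bound $I_j(r)\leqslant c\|h_j\|_{2,\mathcal{C}_2}^2$ for all $r\leqslant 1$ (again Caccioppoli plus the Gaussian weight), which you do not mention. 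Third, the natural outcome of plugging the defect into the Friedland--Hayman/ACF computation is a \emph{multiplicative} inequality of the type $\Psi'(r)\geqslant -\Psi(r)\bigl(E_1(r)/I_1(r)+E_2(r)/I_2(r)\bigr)$, not the additive $\Psi'(r)\geqslant -F(r)$ you write; to reach the additive form of \eqref{local-monotonicity-in-B_1} you must first re-expand to $\Psi'(r)\geqslant -r^{-4}\bigl(E_1 I_2+E_2 I_1\bigr)$ and only then apply the uniform bounds on $I_j$. With those three repairs the argument matches the cited proof.
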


\begin{proof}
For the proof of this statement we refer the reader to (the proof of) Theorem 1.1.4 \cite{CK98} (see also Theorem 12.12 in \cite{CSa05}).
\end{proof}

\vspace{0.2cm}

\begin{remark} \label{rescaled-monotonicity-formula}
By rescaling one can easily derive  the following modification of the local monotonicity formula (\ref{local-monotonicity-in-B_1}):
$$
\Phi (r, \zeta_R) \leqslant \Phi (R, \zeta_R) +\frac{N(n)}{R^{2n+8}}\|h_1\|^2_{2, Q_{2R}(z^0)} \|h_2\|^2_{2, Q_{2R}(z^0)} \qquad \forall r\leqslant R=\sqrt{t^0}.
$$
Here $\zeta_R(x)=\xi_{R,x^0}(|x|)$ is a standard cut-off function (see Notation) and
\begin{equation} \label{rescaled functional}
\Phi (r, \zeta_R)=\Phi(r, h_1, h_2, \zeta_R, z^0)=\frac{1}{r^4}I(r, \zeta_R h_1, z^0)I(r, \zeta_Rh_2, z^0).
\end{equation}
\end{remark}

\section{Estimate of the time derivative}

For $\varepsilon >0$ we consider the regularized problem
\begin{align}
H[u^{\varepsilon}]&=f^{\varepsilon}(u^{\varepsilon}) \quad \  \text{in } \quad \mathcal{C}_{9},
\label{regularized-eqn}\\
u^{\varepsilon}&=u \qquad \quad \  \text{on} \quad \partial B_{9}\times ]0,1], \label{regularized-BC}\\
u^{\varepsilon}&=\varphi_{ \varepsilon  } \qquad \quad \text{on} \quad B_{9} \times \left\lbrace 0\right\rbrace , \label{regularized-IC} 
\end{align}
where $f^{\varepsilon}$ is a smooth non-decreasing function such that $f^{\varepsilon}(s)=\lambda^+$ as $s \geqslant \varepsilon$ and $f^{\varepsilon}(s)=-\lambda^-$ as $s \leqslant \varepsilon$; while $\varphi_{ \varepsilon }$ is a mollifier of $\varphi$ with the radius depending on the distance to $\partial B_9$ such that 
$$
\sup\limits_{B_{9}}|\varphi-\varphi_{\varepsilon }| \leqslant\varepsilon,
$$ 
 and $u$ satisfies (\ref{main-equation})-(\ref{initial-condition}). 

By the parabolic theory, for each $\varepsilon >0$,  the regularized problem (\ref{regularized-eqn})-(\ref{regularized-IC}) has a solution $u^{\varepsilon}$ with $Du^{\varepsilon}$ and $\partial_t u^{\varepsilon}$ belonging to $L^2\left( \mathcal{C}_{9}\right) $. 

\vspace{0.2cm}

\begin{lemma} \label{uniform-convergence}
Let $\varepsilon >0$, let $u$ satisfy (\ref{main-equation})-(\ref{initial-condition}), and let $u^{\varepsilon}$ be a solution of (\ref{regularized-eqn})-(\ref{regularized-IC}). Then 
\begin{equation} \label{convergence}
\sup\limits_{\mathcal{C}_{9}}|u^{\varepsilon} - u| \leqslant \varepsilon.
\end{equation}
\end{lemma}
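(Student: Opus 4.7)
The plan is to run a comparison argument with a one-sided barrier of amplitude $\varepsilon$. Since the statement is symmetric under swapping $u$ and $u^\varepsilon$, it is enough to show $u^\varepsilon - u \leq \varepsilon$ in $\mathcal{C}_9$; the reverse inequality follows by the same reasoning applied to $u - u^\varepsilon$. Accordingly, set $w := u^\varepsilon - u - \varepsilon$ and aim to prove $w_+ \equiv 0$.

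First I would check that $w \leq 0$ on the parabolic boundary $\partial' \mathcal{C}_9$: on the lateral face $\partial B_9 \times ]0,1]$ this is immediate from (\ref{regularized-BC}) since $w = -\varepsilon$; at $t=0$ we have $w = \varphi_\varepsilon - \varphi - \varepsilon \leq 0$ from the defining property of the mollifier. Next I would inspect the right-hand side of
$$
H[w] = f^\varepsilon(u^\varepsilon) - f(u)
$$
on the open set $\{w > 0\} = \{u^\varepsilon > u + \varepsilon\}$. A two-case split suffices: if $u \geq 0$ at such a point then $u^\varepsilon > \varepsilon$, so by the definition of $f^\varepsilon$ we have $f^\varepsilon(u^\varepsilon) = \lambda^+ \geq f(u)$; if $u < 0$ then $f(u) = -\lambda^-$ and this is the global lower bound of $f^\varepsilon$, so again $f^\varepsilon(u^\varepsilon) \geq f(u)$. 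Thus $H[w] \geq 0$ a.e.\ on $\{w > 0\}$.

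I would conclude by a standard energy test: using $w_+$ as a test function in the distributional equation for $w$, integrating by parts in $B_9$ (the boundary term vanishes since $w_+ = 0$ on $\partial B_9$), and rewriting $\partial_t w \cdot w_+ = \tfrac{1}{2}\partial_t(w_+^2)$, one obtains
$$
\int_0^1 \!\! \int_{B_9} |Dw_+|^2\, dx\, dt + \tfrac{1}{2}\int_{B_9} w_+^2(x,1)\, dx \leq -\int_0^1\!\!\int_{B_9} \bigl(f^\varepsilon(u^\varepsilon) - f(u)\bigr) w_+\, dx\, dt \leq 0,
$$
so $w_+ \equiv 0$ and the upper bound is proved. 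Repeating the argument with $-w$ yields the lower bound.

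The only delicate point is that $u$ is merely a weak solution, so the integration-by-parts must be justified rather than assumed; this is where I expect most of the bookkeeping to go. However, since $f(u) \in L^\infty(\mathcal{C}_{10})$, standard parabolic $L^p$-theory places $u$ in $W^{2,1}_p$ locally for every finite $p$, which gives $w \in W^{2,1}_p(\mathcal{C}_9)$ and legitimates using $w_+ \in L^2(0,1;H^1_0(B_9))$ as a test function with the usual chain rule. With that technical point discharged, the comparison argument is straightforward and the constant $\varepsilon$ is sharp by the mollifier bound.
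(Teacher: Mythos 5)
Your proof is correct and follows essentially the same route as the paper's: set $w = u^\varepsilon - u$ (you shift by $\varepsilon$, but this is purely notational), test the difference of the two equations against $(w-\varepsilon)_+$, integrate by parts, and exploit the monotonicity of $f^\varepsilon$ together with the sign analysis on $\{u\geq 0\}$ vs.\ $\{u<0\}$ to conclude the source term has the favourable sign. The only stylistic difference is that the paper phrases the sign argument as ``the integrand is nonpositive on one set and the test function vanishes on the complement,'' whereas you split $\{w>0\}$ directly by the sign of $u$; the two are logically equivalent.
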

\begin{proof}
Setting $w=u^{\varepsilon}-u$ we observe that $w\big|_{\partial '\mathcal{C}_{9}}=u_{\left\lbrace \varepsilon\right\rbrace }-u$, and, consequently, $\left( w-\varepsilon\right)_{+}\big|_{\partial '\mathcal{C}_{9}}=0$.  Then Eqs. (\ref{regularized-eqn}) and (\ref{main-equation}) together with integration by parts provide for arbitrary $t \in ]0,1]$ the following identity
\begin{equation} \label{identity}
\begin{aligned}
\iint\limits_{B_9\times ]0,t]}\left[ f(u)-f^{\varepsilon} (u^{\varepsilon})\right] &\left( w-\varepsilon\right)_+ dz=\iint\limits_{B_9 \times ]0,t]}-H[w] \left( w-\varepsilon\right)_+ dz\\ 
&=\frac{1}{2}\int\limits_{B_9}\left[ (w-\varepsilon)_+\right]^2\bigg|_0^t dx +\iint\limits_{\left\lbrace w>\varepsilon\right\rbrace } |Dw|^2 dz.
\end{aligned}
\end{equation}

Taking into account the relations
$f(u)-f^{\varepsilon} (u^{\varepsilon}) \leqslant 0$ on the set  $\left\lbrace u^{\varepsilon} > \varepsilon\right\rbrace \cup \left\lbrace u<0\right\rbrace $ and $\left( w-\varepsilon\right)_+=0$ on the set $\left\lbrace u^{\varepsilon} \leqslant \varepsilon \right\rbrace \cap \left\lbrace u \geqslant 0\right\rbrace $, we conclude that the left-hand side of 
identity (\ref{identity}) is nonpositive. The latter means that
\begin{equation} \label{leqslant}
\sup\limits_{\mathcal{C}_9}w \leqslant \varepsilon.
\end{equation}

Replacing in identity (\ref{identity}) the term $\left( w-\varepsilon\right)_+$ by $\left( w+\varepsilon\right)_-$ and repeating the above arguments we end up with
\begin{equation} \label{geqslant}
\inf\limits_{\mathcal{C}_9}w \geqslant -\varepsilon.
\end{equation}

Combination inequalities (\ref{leqslant}) and (\ref{geqslant}) finishes the proof.
\end{proof}

\vspace{0.2cm}

We observe also that
$$
\partial_tu^{\varepsilon} \big|_{t=0}=-f^{\varepsilon}(u^{\varepsilon}) \big|_{t=0}+\Delta \varphi_{ \varepsilon }, \qquad x\in B_9.
$$
Thanks to condition (\ref{condition-on-varphi}) we may conclude that $\partial_tu^{\varepsilon} \big|_{t=0}$ are bounded uniformly with respect to $\varepsilon$. Moreover, for each small $\delta >0$ the functions $u^{\varepsilon}$ are smooth in the closure of the cylinder $\mathcal{C}_{9-\delta}$.

With the estimate  (\ref{convergence}) at hands it is easy to check that 
$$
\|\partial_t u^{\varepsilon}\|_{2, \mathcal{C}_{8}} \leqslant c,
$$
and the latter inequality is also uniform with respect to $\varepsilon$. Hence we can easily deduce the following result.

\vspace{0.2cm}

\begin{lemma} \label{regularized estimate}
For each small $\delta >0$ the uniform estimate
\begin{equation} \label{regularized-ut}
\sup\limits_{\mathcal{C}_{8-\delta}}|\partial_t u^{\varepsilon}| \leqslant N_1 (n, M, \lambda^{\pm}, \varphi, \delta )
\end{equation}
holds true for solutions $u^{\varepsilon}$ of  the regularized problem (\ref{regularized-eqn})-(\ref{regularized-IC}).
\end{lemma}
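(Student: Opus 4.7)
The plan is to differentiate the regularized equation (\ref{regularized-eqn}) in time, use the monotonicity of $f^{\varepsilon}$ to show that the time derivative has sub-caloric positive and negative parts, and then reduce the lemma to a local boundedness estimate for non-negative sub-caloric functions that straddles the initial surface.

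Set $v^{\varepsilon}:=\partial_t u^{\varepsilon}$. Since $u^{\varepsilon}$ is smooth in $\overline{\mathcal{C}_{9-\delta_0}}$ for every small $\delta_0>0$, differentiating (\ref{regularized-eqn}) in $t$ yields
\[
H[v^{\varepsilon}] = (f^{\varepsilon})'(u^{\varepsilon})\,v^{\varepsilon} \qquad \text{in } \mathcal{C}_{9-\delta_0}.
\]
Because $f^{\varepsilon}$ is non-decreasing, $(f^{\varepsilon})'(u^{\varepsilon})\geqslant 0$, and the standard argument behind Fact~\ref{sub-caloricity} gives
\[
H[(v^{\varepsilon})_{\pm}] \geqslant (f^{\varepsilon})'(u^{\varepsilon})\,(v^{\varepsilon})_{\pm} \geqslant 0 \qquad \text{in } \mathcal{C}_{9-\delta_0},
\]
so both $(v^{\varepsilon})_+$ and $(v^{\varepsilon})_-$ are non-negative, sub-caloric, and smooth for $t>0$. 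The discussion preceding the lemma already supplies an $\varepsilon$-uniform initial bound
\[
M_0 := \sup_{B_9}|v^{\varepsilon}(\cdot,0)| = \sup_{B_9}\bigl|\Delta\varphi_{\varepsilon}-f^{\varepsilon}(\varphi_{\varepsilon})\bigr| \leqslant C(\varphi,\lambda^{\pm}),
\]
in which $\|\Delta \varphi_{\varepsilon}\|_{\infty,B_9}$ is controlled through the $C^{1,1}$-regularity of $\varphi$, together with the $L^2$-bound $\|v^{\varepsilon}\|_{2,\mathcal{C}_8}\leqslant c$.

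Next I introduce the truncations $W^{\varepsilon}_{\pm}:=((v^{\varepsilon})_{\pm}-M_0)_+$. These are non-negative, sub-caloric in $\mathcal{C}_{9-\delta_0}$ (as positive parts of sub-caloric functions), vanish on $B_9\times\{0\}$, and are uniformly bounded in $L^2(\mathcal{C}_8)$. Extending $W^{\varepsilon}_{\pm}$ by zero across the initial surface produces a sub-caloric function in $B_9\times(-1,1]$ in the distributional sense: the vanishing trace at $t=0$ kills the boundary term in the integration by parts and so preserves the sign of $H$. For any $z^0\in\mathcal{C}_{8-\delta}$ the cylinder $Q_{2\rho}(z^0)$ with $\rho=\delta/4$ is contained in $B_8\times(-1,1]$, so the standard Moser local boundedness estimate for non-negative sub-caloric functions gives
\[
\sup_{Q_{\rho}(z^0)} W^{\varepsilon}_{\pm} \leqslant C(n,\delta)\,\|W^{\varepsilon}_{\pm}\|_{2,Q_{2\rho}(z^0)} \leqslant C(n,\delta)\,\|v^{\varepsilon}\|_{2,\mathcal{C}_8}.
\]
A finite cover of $\mathcal{C}_{8-\delta}$ yields $\sup_{\mathcal{C}_{8-\delta}}|v^{\varepsilon}|\leqslant M_0 + C(n,\delta,\varphi,\lambda^{\pm})$, which is exactly (\ref{regularized-ut}).

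The principal obstacle is the initial slab $\{0\leqslant t\leqslant c\delta^2\}$, where interior Moser estimates do not apply directly and a plain maximum principle is unavailable because $v^{\varepsilon}$ has no ready lateral boundary data. Absorbing the initial value into the constant $M_0$ and then zero-extending below $t=0$ circumvents both difficulties: the $C^{1,1}$-regularity of $\varphi$ is exactly what makes $M_0$ independent of $\varepsilon$, while the monotonicity of $f^{\varepsilon}$ is exactly what makes $(v^{\varepsilon})_{\pm}$ sub-caloric, and together these two facts allow a single Moser estimate to be applied on a cylinder that straddles $\{t=0\}$.
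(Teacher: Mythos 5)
Your proposal is correct and follows essentially the same route as the paper: establish that $(\partial_t u^{\varepsilon})_{\pm}$ are sub-caloric (via monotonicity of $f^{\varepsilon}$), control the initial trace through $\Delta\varphi_{\varepsilon}$ and $f^{\varepsilon}(\varphi_{\varepsilon})$, and then invoke a parabolic local boundedness estimate valid up to the initial surface. Where the paper simply cites such an estimate (Lemma~3.1 and Remark~6 of \cite{NU2011}), you reconstruct it from the interior Moser estimate by truncating at the initial bound $M_0$ and extending by zero across $\{t=0\}$, which is a sound, self-contained way to obtain the same inequality.
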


\begin{proof}
We set $v=\partial_t u^{\varepsilon}$. 
It is easy to see that  $v_{\pm}$ are subcaloric in $\mathcal{C}_{9}$.  

Now we may apply the well-known parabolic estimate (see, for example, Lemma~3.1 and Remark~6 from \cite{NU2011}) and get
$$
\sup\limits_{\mathcal{C}_{8-\delta}} v_{\pm} \leqslant N  \sqrt{\fint_{\mathcal{C}_{8}}v_{\pm}^2(z) dz}+
\sup\limits_{B_{8}\times \left\lbrace 0\right\rbrace }v_{\pm},
$$
which implies the desired inequality (\ref{regularized-ut}).
\end{proof}

\vspace{0.2cm}

\begin{remark} \label{remark1}
By virtue of Lemma~\ref{uniform-convergence}, solutions $u^{\varepsilon}$ of the regularized problem (\ref{regularized-eqn})-(\ref{regularized-IC})  converge to $u$ as $\varepsilon \to 0$  uniformly in $\mathcal{C}_{9}$.
\end{remark}

\begin{remark}
It follows from Lemma~\ref{regularized estimate} and Remark~\ref{remark1} that the estimate
\begin{equation} \label{estimate-for-ut}
\sup\limits_{\mathcal{C}_{7}}|\partial_t u| \leqslant N_1(n, M, \lambda^{\pm}, \varphi, 1)
\end{equation}
holds true for a function $u$ satisfying (\ref{main-equation})-(\ref{initial-condition}). Here $N_1$ is the same constant  as in Lemma~\ref{regularized estimate}. 
\end{remark}

\section{Estimates of the second derivatives}

\begin{lemma} \label{estimates-Du-Dphi}
Let the assumptions of Theorem~\ref{main-theorem} hold, 
let  $z^0=(x^0,t^0)$ be a  point in $\mathcal{C}_1$, and let $R=\sqrt{t^0}$.

Then there exists a positive constant $N_2$ completely defined by the values of $n$, $M$, $\lambda^{\pm}$, and the norms of $\varphi$ such that
\begin{equation} \label{pointwise-estimate-u-varphi}
\sup\limits_{B_{2R}(x^0)}|Du(\cdot,t)-D\varphi(\cdot)|\leqslant N_2R \quad \text{for} \quad t\in ]0,t^0].
\end{equation}
\end{lemma}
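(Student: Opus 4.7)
\emph{Plan.} The plan is to set $w := u - \varphi$ and reduce the lemma to a single invocation of the classical interior parabolic $L^\infty$-gradient estimate together with the bound (\ref{estimate-for-ut}) on $\partial_t u$ from \S 2. Since $\varphi$ is time-independent and $u(\cdot,0)=\varphi$, we have $w(\cdot,0)\equiv 0$, while
\[
H[w] = f(u) - \Delta\varphi \qquad \text{in }\mathcal{C}_{9},
\]
with $\|H[w]\|_{L^\infty(\mathcal{C}_{9})} \leqslant \lambda^+ + \lambda^- + \|D^2\varphi\|_{L^\infty(B_{10})} =: C_\varphi$. Since $\partial_t w = \partial_t u$ is bounded by $N_1$ throughout $\mathcal{C}_{7}$ by (\ref{estimate-for-ut}), and $w(\cdot,0)=0$, integration in time yields the pointwise bound
\[
|w(x,t)| \leqslant N_1\, t, \qquad (x,t) \in \mathcal{C}_{7}.
\]

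\emph{Core step.} Next I would fix $(x,t)\in B_{2R}(x^0)\times\,]0,t^0]$ and apply the standard interior gradient estimate for the heat operator to $w$ on the parabolic cylinder $Q_{2\rho}(x,t)$ with $\rho := \tfrac{1}{4}\sqrt{t}$. The choice of $\rho$ ensures that the base of $Q_{2\rho}(x,t)$ sits at $3t/4>0$ and that $B_{2\rho}(x)\subset B_{|x^0|+2R+\sqrt{t}/2}\subset B_{4}$, so the cylinder lies inside $\mathcal{C}_{7}$ where the above bounds hold. The estimate
\[
|Dw(x,t)| \leqslant c(n)\Bigl(\rho^{-1}\|w\|_{L^\infty(Q_{2\rho}(x,t))} + \rho\,\|H[w]\|_{L^\infty(Q_{2\rho}(x,t))}\Bigr)
\]
together with $\|w\|_\infty\leqslant N_1 t$ and $\|H[w]\|_\infty\leqslant C_\varphi$ then gives
\[
|Dw(x,t)| \leqslant c(n)\bigl(4N_1\sqrt{t} + \tfrac{1}{4}C_\varphi\sqrt{t}\bigr) \leqslant N_2\sqrt{t} \leqslant N_2 R,
\]
which is exactly (\ref{pointwise-estimate-u-varphi}), since $Dw = Du - D\varphi$.

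\emph{Main obstacle.} The only delicate point is the parabolic distance to the initial layer $\{t=0\}$: an interior estimate cannot be applied on a cylinder whose base reaches below $t=0$, which forces the radius to be of order $\sqrt{t}$. Selecting $\rho\sim\sqrt{t}$ is precisely what converts the linear-in-$t$ bound $|w|\leqslant N_1 t$ into the linear-in-$\sqrt{t}$ bound on $|Dw|$, producing the rate $R$ claimed in the lemma and matching the natural parabolic self-similarity of the problem. Once this scaling is identified, the remainder of the argument is essentially automatic, and the constant $N_2$ is controlled solely by $n$, $\lambda^\pm$, $N_1$ and $\|D^2\varphi\|_{L^\infty(B_{10})}$, as required.
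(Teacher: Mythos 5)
Your proof is correct, and it takes a genuinely different route from the paper. Both arguments hinge on the same two ingredients --- the bound $\sup_{\mathcal{C}_7}|\partial_t u|\leqslant N_1$ from \S 2, and the consequent pointwise bound $|u(x,t)-\varphi(x)|=|\int_0^t\partial_s u\,ds|\leqslant N_1\,t$ --- but from there the arguments diverge. The paper freezes time: for a.e.\ fixed $t$ it regards $u(\cdot,t)-\varphi$ as a solution of an elliptic equation $\Delta(u-\varphi)=F$ with $F=f(u)+\partial_t u-\Delta\varphi\in L^\infty$, derives a Caccioppoli energy estimate $\int_{B_{3R}(x^0)}|D(u-\varphi)|^2\leqslant N R^{n+2}$ via the test function $(u-\varphi)\zeta_{3R}^2$, and then invokes the classical elliptic $L^\infty$ gradient bound for $D_e(u-\varphi)$ in terms of its $L^2$ mean and $\|F\|_\infty$. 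You instead stay fully parabolic: you apply the interior space-time gradient estimate $|Dw(x,t)|\leqslant c\bigl(\rho^{-1}\|w\|_{L^\infty}+\rho\|H[w]\|_{L^\infty}\bigr)$ on cylinders $Q_{2\rho}(x,t)$ with the scale $\rho\sim\sqrt{t}$ chosen exactly to fit above the initial layer, and the choice $|w|\leqslant N_1 t$, $\rho\sim\sqrt{t}$ manufactures the rate $\sqrt{t}\leqslant R$. Your version makes the parabolic self-similarity visible and skips the Caccioppoli step at the cost of invoking a slightly heavier black box (local $W^{2,1}_p$ or split-into-caloric-plus-inhomogeneous interior estimate), whereas the paper's version is more elementary from the PDE-toolbox standpoint but obscures where the $\sqrt{t}$ scaling comes from. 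Both are fine; your inclusion check ($x\in B_{2R}(x^0)\subset B_3$, $2\rho\leqslant\sqrt{t}/2\leqslant 1/2$, base at $3t/4>0$) correctly places $Q_{2\rho}(x,t)$ inside $\mathcal{C}_7$ where the $\partial_t u$ bound is available.
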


\begin{remark}
It is evident that $Q_{6R}(z^0) \subset \mathcal{C}_{7}$.
\end{remark}

\begin{proof} First, we observe that the assumption (\ref{condition-on-varphi}) implies 
$\Delta \varphi \in L^{\infty} \left( B_{10}\right) $.
Therefore, for almost all $t \in ]0, 1]$ the difference $u(\cdot, t)-\varphi (\cdot)$ can be regarded in $B_7$ as a solution of an elliptic equation 
\begin{equation} \label{elliptic-solution}
\begin{aligned}
\Delta \left( u(x,t)-\varphi (x)\right) &=F(x)\equiv
 f(u(x,t))+\partial_t u (x,t)-\Delta \varphi (x).
\end{aligned}
\end{equation} 
Due to  estimate (\ref{estimate-for-ut}), the function $|F|$ is bounded by the known constant up to the bottom of the cylinder $\mathcal{C}_{7}$. 

Thus, for a test function $\eta \in W^{1,2}_{0}(B_{6R}(x^0))$ we have the integral identity
\begin{equation} \label{identity-1}
\int\limits_{B_{6R}(x^0)} D(u-\varphi)D\eta dx=\int\limits_{B_{6R}(x^0)} F \eta dx.
\end{equation}
We set in (\ref{identity-1}) $\eta=\left( u-\varphi\right) \zeta_{3R}^2$, where $\zeta_{3R}(x):=\xi_{3R,x^0}(|x|)$ is a standard time-independent cut-off function (see Notation).
Then, after integrating by parts and subsequent application of Young's inequality, identity (\ref{identity-1}) takes the form
\begin{equation} \label{identity-2}
\begin{aligned}
\int\limits_{B_{6R}(x^0)} |D(u-\varphi )|^2\zeta_{3R}^2 dx \leqslant c_1 \int\limits_{B_{6R}(x^0)} &(u-\varphi)\zeta_{3R}^2dx \\ &+c_2 \int\limits_{B_{6R}(x^0)} (u-\varphi)^2 |D\zeta_{3R}|^2dx.
\end{aligned}
\end{equation}
It is easy to see that inequality (\ref{estimate-for-ut}) implies the estimate 
$$
\sup\limits_{B_{6R}(x^0)\times ]0,t^0]} |u-\varphi| \leqslant cR^2.
$$
Putting together  (\ref{identity-2}) and the above inequality we arrive at
\begin{equation} \label{indentity-5}
\int\limits_{B_{3R}(x^0)} |D(u(x,t)-\varphi (x))|^2 dx \leqslant N_2 R^{n+2}, \qquad t\in [0,t^0].
\end{equation}

Now we observe that for almost all $t\in ]0,1]$ and for any  direction $e \in \mathbb{R}^n$ the difference $D_eu-D_e\varphi$ may be considered as a weak solution of the  equation
$$
\Delta \left( D_eu(x,t)-D_e\varphi (x) \right) =-D_eF(x)
$$
 in $B_7$.
The well known results (see \cite{LU68}, \cite{GTr98}) applied to the difference $D_eu-D_e\varphi$ yield the inequality 
$$
\sup\limits_{B_{2R}(x^0)}|D_eu-D_e\varphi| \leqslant c\sqrt{\fint\limits_{B_{3R}(x^0)}|D_eu-D_e\varphi|^2dx}+cR\sup\limits_{B_{3R}(x^0)}|F|.
$$
Combining the last inequality with the estimate (\ref{indentity-5}), we get  (\ref{pointwise-estimate-u-varphi}) and finish the proof. 
\end{proof}

\vspace{0.2cm}

\begin{lemma} \label{estimate-part-of-Phi(R)}
Let the assumptions of Lemma~\ref{estimates-Du-Dphi} hold.  Then 
\begin{equation} \label{first-part-Phi(R)}
\int_0^{R^2}\int_{\mathbb{R}^n}|D^2u(x,t)|^2\zeta_R^2(x)G(x-x^0,t^0-t)dxdt  \leqslant N_3 (n, M, \lambda^{\pm}, \varphi) R^2,
\end{equation}
where $\zeta_R (x):=\xi_{R,x^0}(|x|)$ is a standard time-independent cut-off function (see Notation) and the heat kernel $G$ is defined by the formula (\ref{heat kernel}). 
\end{lemma}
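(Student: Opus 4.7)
The strategy is to reduce (\ref{first-part-Phi(R)}) to a Bochner-type estimate for the second derivatives of $w:=u-\varphi$, which vanishes at $t=0$ and satisfies $Hw = F := f(u)-\Delta\varphi$ with $|F|\leqslant c(\varphi,\lambda^\pm)$. Since $|D^2u|^2\leqslant 2|D^2w|^2+2|D^2\varphi|^2$ and $\int_0^{R^2}\int_{\mathbb{R}^n}\zeta_R^2 G\,dxdt\leqslant R^2$, the contribution of $|D^2\varphi|^2$ to the left-hand side of (\ref{first-part-Phi(R)}) is already bounded by $c(\varphi)R^2$; it thus suffices to establish the analogue of (\ref{first-part-Phi(R)}) with $w$ in place of $u$.

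To circumvent the lack of pointwise smoothness of $u$, I would work with the regularization $u^\varepsilon$ from Section~2, setting $w^\varepsilon:=u^\varepsilon-\varphi_\varepsilon$. For this smooth function the pointwise Bochner identity
$$
H\bigl(|Dw^\varepsilon|^2\bigr)=2|D^2w^\varepsilon|^2+2\,Dw^\varepsilon\cdot DF^\varepsilon,\qquad F^\varepsilon:=f^\varepsilon(u^\varepsilon)-\Delta\varphi_\varepsilon,
$$
holds classically in $\mathcal{C}_8$. I multiply by $\zeta_R^2G$ and integrate over $\mathbb{R}^n\times(0,R^2]$. When integrating the $H$-term by parts, I use the crucial identity $\partial_tG(x-x^0,t^0-t)=-\Delta G(x-x^0,t^0-t)$ (valid for $t<t^0$): this cancels the principal part of $H$ against $G$, leaving only the commutator expression with coefficients $G\Delta\zeta_R^2$ and $2D\zeta_R^2\cdot DG$ applied to $|Dw^\varepsilon|^2$, together with the nonpositive endpoint $-|Dw^\varepsilon(z^0)|^2\zeta_R^2(x^0)$; the initial contribution at $t=0$ vanishes because $Dw^\varepsilon|_{t=0}=0$. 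On the term $Dw^\varepsilon\cdot DF^\varepsilon$ I integrate by parts in space (so as never to differentiate $f^\varepsilon(u^\varepsilon)$), producing $F^\varepsilon\zeta_R^2G\Delta w^\varepsilon$ (absorbed via Young's inequality into $\int\zeta_R^2G|D^2w^\varepsilon|^2\,dz$ on the left) and $F^\varepsilon Dw^\varepsilon\cdot D(\zeta_R^2G)$.

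After absorption the right-hand side consists of three kinds of integrals: $\int|F^\varepsilon|^2\zeta_R^2G\,dz\leqslant cR^2$ immediately; $\int(|\Delta\zeta_R^2|G+|D\zeta_R^2||DG|)|Dw^\varepsilon|^2\,dz$; and $\int|F^\varepsilon||Dw^\varepsilon||D(\zeta_R^2G)|\,dz$. The decisive pointwise input for the last two is the bound $|Dw^\varepsilon|\leqslant cR$ on $B_{2R}(x^0)\times(0,t^0]$, which comes from applying the argument of Lemma~\ref{estimates-Du-Dphi} to $u^\varepsilon$ (whose $\partial_t$ is uniformly bounded by Lemma~\ref{regularized estimate}). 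Combining this with $|D\zeta_R|\leqslant c/R$, $|\Delta\zeta_R|\leqslant c/R^2$, the identities $\int_{\mathbb{R}^n}G(\cdot,s)\,dx=1$ and $\int_{\mathbb{R}^n}|DG(\cdot,s)|\,dx=c/\sqrt{s}$, and the Gaussian tail bound $\int_{\{|y|\geqslant R\}}G(y,s)\,dy\leqslant c\exp(-R^2/(8s))$, each of these integrals is shown to contribute at most $c(n,M,\lambda^\pm,\varphi)R^2$.

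The final step is to let $\varepsilon\to 0$: uniform convergence $u^\varepsilon\to u$ (Remark~\ref{remark1}) together with weak $L^2$ compactness of $D^2w^\varepsilon$ on compact subsets of $\mathcal{C}_9\setminus\{t=0\}$ allow lower semicontinuity to transfer the bound to $w$, and hence to $u$ via the splitting mentioned at the start. I expect the main technical difficulty to be the estimation of the ``annular'' contributions involving $DG$, where the kernel itself is not small; there the $R$-factor from Lemma~\ref{estimates-Du-Dphi} has to be matched against the sharp scaling $\int_{\mathbb{R}^n}|DG(\cdot,s)|\,dx\sim s^{-1/2}$ and the exponential smallness of the Gaussian tail on the annulus $B_{2R}(x^0)\setminus B_R(x^0)$, and it is precisely this balance that makes the right-hand side $O(R^2)$ rather than anything weaker.
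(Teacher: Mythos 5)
Your proposal is correct, but it takes a genuinely different route from the paper's proof, and it is worth pointing out how.

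The paper does \emph{not} use a Bochner identity on $|D(u-\varphi)|^2$. Instead it fixes the distinguished direction $e$ (arbitrary if $Du(z^0)=0$, otherwise $e\perp Du(z^0)/|Du(z^0)|$), sets $v=(D_eu)_\pm$, and invokes Fact~\ref{sub-caloricity}: both $v$'s are nonnegative and sub-caloric. The pointwise identity $\tfrac12 H[v^2]=|Dv|^2+vH[v]$ then yields $|Dv|^2\leqslant\tfrac12 H[v^2]$ \emph{because} $vH[v]\geqslant 0$; this sign condition replaces any estimation of a reaction term. Integrating against $\zeta_R^2G$ and integrating by parts produces, as in your scheme, the cancellation $\partial_tG+\Delta G=0$ and the annular remainder, but the endpoint at $t=0$ does \emph{not} vanish: it equals $\int |D_e\varphi|^2\zeta_R^2G(x-x^0,t^0)\,dx$, and the choice of $e$ is precisely what makes this $O(R^2)$, via $|D_e\varphi|\leqslant cR$ on $B_{2R}(x^0)$ (a consequence of Lemma~\ref{estimates-Du-Dphi} together with $D_eu(z^0)=0$). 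The top endpoint at $t=t^0$ has the good sign, as in your version. Finally the missing derivative $D_\nu D_\nu u$ is recovered from the equation.

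Your route trades the directional trick for regularization. By working with $w^\varepsilon=u^\varepsilon-\varphi_\varepsilon$ you annihilate the $t=0$ endpoint automatically ($Dw^\varepsilon|_{t=0}=0$), so no special direction is needed; in compensation you incur the extra term $Dw^\varepsilon\cdot DF^\varepsilon$ in the Bochner identity, which you correctly remove by an integration by parts that differentiates the weight rather than $F^\varepsilon$, absorbing $F^\varepsilon\Delta w^\varepsilon$ by Young. You also need the regularization (to make $D\Delta\varphi_\varepsilon$ meaningful before it is integrated away) and a weak-compactness/Fatou passage to the limit. Both approaches ultimately balance the pointwise gain $|Dw|\lesssim R$ from Lemma~\ref{estimates-Du-Dphi} against the $R^{-n-2}$-scaling of the annular kernel terms, and both arrive at the $O(R^2)$ bound. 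The paper's argument is shorter because the sub-caloricity sign condition replaces an entire term, and no $\varepsilon$-limit is required at this stage; your argument has the advantage of giving the full Hessian $D^2w$ at once without conditioning on a direction, which some may find conceptually cleaner.
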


\begin{proof} 

Suppose that $e$ is an arbitrary direction in $\mathbb{R}^n$ if $Du (z^0)=0$ and $e\perp \nu$, where $\nu=Du (z^0) / |D u(z^0)|$, otherwise. From (\ref{condition-on-varphi}), (\ref{pointwise-estimate-u-varphi}) and our choice of $e$ it follows that
\begin{equation} \label{estimate-for-D_evarphi}
\sup\limits_{B_{2R}(x^0)}|D_e\varphi|\leqslant cR.
\end{equation}

According to Fact~\ref{sub-caloricity}, the functions $v=\left( D_eu\right)_{\pm}$ are sub-caloric in $\mathcal{C}_2$, i.e., 
$H[v]\geqslant 0$ in the sense of distributions. 
Since $|Dv|^2+vH[v]=\frac{1}{2}H[v^2]$ we have
\begin{equation}\label{1}
\begin{aligned}
\int\limits_0^{R^2}\int\limits_{B_{2R}(x^0)} &|Dv(x,t)|^2 \zeta_R^2(x)G(x-x^0,t^0-t)dxdt\\ 
&\leqslant
\frac{1}{2}\int\limits_0^{R^2}\int\limits_{B_{2R}(x^0)} H[v^2(x,t)]\zeta_R^2(x)G(x-x^0,t^0-t)dxdt. 
\end{aligned}
\end{equation}
After successive integration the right-hand side of (\ref{1}) by parts we get
\begin{align*}
\int\limits_0^{R^2}\int\limits_{B_{2R}(x^0)}|Dv|^2\zeta_R^2Gdxdt&\leqslant -\int\limits_{B_{2R}(x^0)}\left(\frac{v^2}{2}\zeta_R^2G\right) \bigg|_0^{R^2}dx\\
&+\int\limits_0^{R^2}\int\limits_{B_{2R}(x^0)}\frac{v^2}{2}\zeta_R^2\left[ \partial_tG+\Delta G\right] dxdt \\
&+\int\limits_0^{R^2}\int\limits_{B_{2R}(x^0)} v^2\left[ 2\zeta_R D\zeta_R DG+G|D\zeta_R|^2+G\zeta_R\Delta\zeta_R\right] dxdt\\
&=:I_1+I_2+I_3.
\end{align*}

It is evident that due to (\ref{initial-condition}) we have
\begin{align*}
I_1&\leqslant \int\limits_{B_{2R}(x^0)}\frac{|D_eu(x,0)|^2}{2}\zeta_R^2(x) G(x-x^0,t^0)dx\\
&=\int\limits_{B_{2R}(x^0)}\frac{|D_e\varphi (x)|^2}{2}\zeta_R^2(x) G(x-x^0,t^0)dx \\
&\leqslant cR^2,
\end{align*}
where the last inequality provided by (\ref{estimate-for-D_evarphi}).

Taking into account the relation 
$$
\partial_tG+\Delta G=\partial_t G(x-x^0,t^0-t)+\Delta G(x-x^0,t^0-t)= 0 \quad \text{for}\quad t<t^0,
$$ we conclude that $I_2=0$. 

Finally,  we observe that the integral in $I_3$ is really taken over the set
$E=]0,R^2] \times \left\lbrace B_{2R}(x^0)\setminus B_{R}(x^0)\right\rbrace $. Therefore, in $E$ we have the following estimates for functions involved into~$I_3$
\begin{gather*}
|G (x-x^0, t^0-t)|\leqslant c\frac{e^{-\frac{R^2}{4(R^2-t)}}}{(R^2-t)^{n/2}}\leqslant cR^{-n};\\
\begin{aligned}
|DG(x-x^0,t^0-t) D\zeta (x)|&\leqslant c|G(x-x^0,t^0-t)|\frac{|x-x^0|}{R(R^2-t)}\\
&\leqslant c\frac{e^{-\frac{R^2}{4(R^2-t)}}}{(R^2-t)^{1+n/2}} \leqslant cR^{-n-2}.
\end{aligned}
\end{gather*}
Consequently,
$$
I_3 \leqslant CR^{-n-2}\iint\limits_{E} v^2 dxdt \leqslant C \sup\limits_{E}v^2 \leqslant CR^2,
$$
where the last inequality follows from (\ref{pointwise-estimate-u-varphi}) and (\ref{estimate-for-D_evarphi}).


Thus, collecting all inequalities 
 we get
\begin{equation} \label{21}
\int\limits_0^{R^2}\int\limits_{B_{2R}(x^0)}|Dv|^2\zeta_R^2Gdz \leqslant CR^{2}
\end{equation}

Inequalities (\ref{21}) mean that we obtained the desired integral estimate for all the derivatives $D\left( D_eu\right) $ with $e \perp \nu$. Similar estimate for the derivative $D_{\nu}\left( D_{\nu}u\right) $ follows from (\ref{21}) and Eq. (\ref{main-equation}).
\end{proof}

\vspace{0.2cm}

\begin{cor} \label{cor-3.3}
Let the assumptions of Lemma~\ref{estimates-Du-Dphi} hold, and let $e$ be an arbitrary direction in $\mathbb{R}^n$. Then 
$$
\Phi_e(R)=\Phi(R, ( D_eu)_+, (D_eu)_-, \zeta_R, z^0) \leqslant N_4(n, M, \lambda^{\pm}, \varphi),
$$
where the functional $\Phi$ is defined by the formula (\ref{rescaled functional}), while $\zeta_R(x):=\xi_{R,x^0}(|x|)$ is a standard time-independent cut-off function (see Notation).
\end{cor}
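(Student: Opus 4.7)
The plan is a dichotomy based on the magnitude of $\alpha := D_eu(z^0)$ relative to $R$. The starting point is an oscillation estimate for $D_eu$: applying Lemma~\ref{estimates-Du-Dphi} both at a generic $(x,t) \in B_{2R}(x^0) \times {]0,t^0]}$ and at the pole $z^0$ (using $Du(z^0) = Du(x^0, t^0)$), and combining with the Lipschitz bound $|D\varphi(x) - D\varphi(x^0)| \leq 2c(\varphi) R$, one obtains
\[
|D_eu(x,t) - \alpha| \leq C_1 R \qquad \text{on}\ B_{2R}(x^0) \times {]0,t^0]},
\]
with $C_1 = 2N_2 + 2c(\varphi)$ depending only on $n$, $M$, $\lambda^{\pm}$, and $\varphi$.

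Case A (aligned direction): if $|\alpha| > 2C_1 R$, the oscillation estimate forces $D_eu$ to keep the sign of $\alpha$ throughout $B_{2R}(x^0) \times {]0,t^0]}$. Consequently one of the functions $(D_eu)_{\pm}$ vanishes identically on $\mathrm{supp}(\zeta_R) \times {]0,t^0]}$, the corresponding factor $I(R, \zeta_R (D_eu)_{\pm}, z^0)$ is zero, and $\Phi_e(R) = 0$.

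Case B (small component): if $|\alpha| \leq 2C_1 R$, the oscillation estimate upgrades to $|D_eu| \leq 3C_1 R$ on $B_{2R}(x^0) \times {]0,t^0]}$. Using $|D(\zeta_R w)|^2 \leq 2\zeta_R^2 |Dw|^2 + 2 w^2 |D\zeta_R|^2$ with $w = (D_eu)_{\pm}$ together with the a.e.\ Stampacchia identity $|D(D_eu)_{\pm}|^2 \leq |D^2u|^2$, I split
\[
I(R, \zeta_R (D_eu)_{\pm}, z^0) \leq 2 \iint \zeta_R^2 |D^2u|^2\, G\, dxdt + 2\iint |D\zeta_R|^2 (D_eu)_{\pm}^2\, G\, dxdt.
\]
The first integral is bounded by $2N_3 R^2$ via Lemma~\ref{estimate-part-of-Phi(R)}. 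For the second, $D\zeta_R$ is supported on the annulus $B_{2R}(x^0) \setminus B_R(x^0)$ with $|D\zeta_R|^2 \leq cR^{-2}$; combined with $(D_eu)_{\pm}^2 \leq 9C_1^2 R^2$ and the elementary Gaussian annulus estimate
\[
\int_0^{R^2}\int_{B_{2R}(x^0) \setminus B_R(x^0)} G(x-x^0, t^0 - t)\, dxdt \leq C R^2
\]
(proved by splitting the $\tau := t^0-t$ integration at $\tau = R^2/10$: exponential decay of $G$ handles $\tau \leq R^2/10$, while $G \leq cR^{-n}$ on the annulus handles moderate $\tau$), one gets $I(R, \zeta_R (D_eu)_{\pm}, z^0) \leq C' R^2$, hence $\Phi_e(R) \leq (C')^2$.

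The main obstacle is treating arbitrary directions $e$. For $e$ aligned with $\nu := Du(z^0)/|Du(z^0)|$ the bound $|D_e\varphi| \leq cR$ exploited inside Lemma~\ref{estimate-part-of-Phi(R)} is not available, so the annulus contribution to $I$ cannot be controlled by $R^2$ by a naive direct estimate. The dichotomy above sidesteps this issue by noting that aligned directions give sign-definite $D_eu$ (collapsing one factor of the product $\Phi_e$), while near-perpendicular directions automatically inherit the smallness $|D_eu| \leq 3C_1 R$ through the oscillation estimate, which is exactly what the annulus term needs.
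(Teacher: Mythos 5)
Your proposal is correct, and it is noticeably more careful than the paper's own one-line justification. The paper simply asserts that the bound ``follows immediately'' from the definition of $\Phi$ together with estimate (\ref{first-part-Phi(R)}). But $I(R,\zeta_R(D_eu)_\pm,z^0)$ involves $|D(\zeta_R(D_eu)_\pm)|^2$, and expanding this produces, besides the $\zeta_R^2|D(D_eu)_\pm|^2$ piece controlled by Lemma~\ref{estimate-part-of-Phi(R)}, an annulus contribution $\iint|D\zeta_R|^2(D_eu)^2_\pm\,G$. For a genuinely arbitrary direction $e$ this annulus term need not be $O(R^2)$: on the support annulus $B_{2R}(x^0)\setminus B_R(x^0)$ one only knows $(D_eu)^2\lesssim|Du|^2$, and $|Du|$ is of order $|D\varphi|=O(1)$, which would give the estimate $O(1)$ rather than $O(R^2)$ and hence $\Phi_e(R)=O(R^{-4})$. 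Your dichotomy on $\alpha=D_eu(z^0)$ repairs this cleanly: when $|\alpha|\gg R$ the oscillation estimate from Lemma~\ref{estimates-Du-Dphi} forces $D_eu$ to be sign-definite near $z^0$, so one factor of $\Phi_e$ vanishes; when $|\alpha|\lesssim R$ the same oscillation estimate gives $|D_eu|\lesssim R$ on $B_{2R}(x^0)\times{]0,t^0]}$, which is exactly what the annulus term needs. An alternative and perhaps simpler fix, closer in spirit to what the authors actually use in the proof of Theorem~\ref{main-theorem}, is to state the corollary only for the restricted direction $e$ (arbitrary if $Du(z^0)=0$, otherwise $e\perp Du(z^0)$), for which the bound $|D_eu|\leqslant cR$ on $B_{2R}(x^0)\times{]0,t^0]}$ follows directly from (\ref{pointwise-estimate-u-varphi}) and (\ref{estimate-for-D_evarphi}); that is the only case invoked downstream. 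Your version earns the stronger statement that the paper actually claims, at the cost of the dichotomy; both routes lead to a constant depending only on $n$, $M$, $\lambda^\pm$, $\varphi$.
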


\begin{proof}
The desired inequality follows immediately from the definition (\ref{rescaled functional}) combined with (\ref{first-part-Phi(R)}).
\end{proof}

\vspace{0.2cm}

\begin{lemma} \label{estimate-of-L_2norm-D_eu}
Let the assumptions of Lemma~\ref{estimates-Du-Dphi} hold, let $\nu=Du (z^0)/|Du(z^0)|$, and let $e$ be an arbitrary direction in $\mathbb{R}^n$ if $|Du(z^0)|=0$ and $e\perp \nu$ otherwise. Then
\begin{equation} \label{L_2-norm-of-D_eu}
\|(D_eu)_{\pm}\|^2_{2, Q_{2R}(z^0)}\leqslant N_5(n, M, \lambda^{\pm},\varphi) R^{n+4}.
\end{equation}
\end{lemma}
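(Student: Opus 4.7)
The plan is to reduce the $L^2$-estimate to a pointwise bound of the form $|D_eu| \leqslant cR$ on $B_{2R}(x^0) \times \,]0, t^0]$, after which squaring and integrating over a set of measure $\lesssim R^{n+2}$ immediately delivers (\ref{L_2-norm-of-D_eu}). The pointwise bound on $D_eu$ will be obtained by adding a pointwise bound for $D_e\varphi$ (which will come from the $C^{1,1}$-regularity of $\varphi$) to the bound $|D_eu - D_e\varphi| \leqslant N_2 R$ already supplied by Lemma~\ref{estimates-Du-Dphi}.

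To control $D_e\varphi$ on $B_{2R}(x^0)$, I would first apply Lemma~\ref{estimates-Du-Dphi} at the single point $(x^0, t^0)$, obtaining $|Du(x^0, t^0) - D\varphi(x^0)| \leqslant N_2R$. By the definition of $e$ we have $D_eu(z^0) = 0$ in both cases of the dichotomy (trivially when $Du(z^0)=0$, and because $e \perp \nu = Du(z^0)/|Du(z^0)|$ otherwise). Hence $|D_e\varphi(x^0)| \leqslant N_2 R$. Next I would propagate this pointwise bound to the entire ball via assumption (\ref{condition-on-varphi}):
$$
|D_e\varphi(x) - D_e\varphi(x^0)| \leqslant \|D^2\varphi\|_{\infty, B_{10}}\,|x - x^0| \leqslant 2R\,\|D^2\varphi\|_{\infty, B_{10}},
$$
which yields $\sup_{B_{2R}(x^0)} |D_e\varphi| \leqslant c(\varphi)R$.

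Combining this with a second application of Lemma~\ref{estimates-Du-Dphi} gives
$$
|D_eu(x,t)| \leqslant |D_eu(x,t) - D_e\varphi(x)| + |D_e\varphi(x)| \leqslant c\,R
$$
for every $(x,t) \in B_{2R}(x^0) \times \,]0, t^0]$, with $c = c(n, M, \lambda^{\pm}, \varphi)$. Since $(D_eu)_{\pm}^2 \leqslant (D_eu)^2 \leqslant c^2 R^2$ on this set, and its measure is at most $|B_{2R}|\cdot t^0 \leqslant c(n) R^{n+2}$, integrating yields (\ref{L_2-norm-of-D_eu}).

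I do not expect any serious obstacle here: the lemma is essentially a packaging of the pointwise $L^\infty$-estimate from Lemma~\ref{estimates-Du-Dphi} with the $C^{1,1}$-bound on $\varphi$. The only subtle point is the case distinction in the definition of $e$, but both alternatives collapse to the same key observation $D_eu(z^0) = 0$, after which the bound on $|D_e\varphi(x^0)|$ is uniform.
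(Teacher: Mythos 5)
Your proposal is correct and follows essentially the same route as the paper: the paper simply cites inequalities (\ref{pointwise-estimate-u-varphi}) and (\ref{estimate-for-D_evarphi}) as "evident" ingredients, while you have additionally spelled out the derivation of (\ref{estimate-for-D_evarphi}) (namely $D_eu(z^0)=0$, then Lemma~\ref{estimates-Du-Dphi} at $z^0$, then $C^{1,1}$-propagation), which matches exactly how the paper obtained that inequality inside the proof of Lemma~\ref{estimate-part-of-Phi(R)}. The final squaring-and-integrating over a set of measure $O(R^{n+2})$ is the intended conclusion.
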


\begin{proof}
It is evident that inequalities (\ref{pointwise-estimate-u-varphi}) and (\ref{estimate-for-D_evarphi})  imply the estimate (\ref{L_2-norm-of-D_eu}).
\end{proof}

\vspace{0.2cm}

\begin{proof}[Proof of Theorem \ref{main-theorem}]


Let $z^0=(x^0,t^0)$ be an arbitrary point from $\mathcal{C}_1$ such that $|u(z^0)|>0$, and let $e\in \mathbb{R}^n$ be 
the same direction as in Lemma~\ref{estimate-of-L_2norm-D_eu}.

\vspace{0.2cm}

Since $D_eu(z^0)=0$, it follows  that
$$
C(n) |D(D_eu) (z^0)|^4 \leqslant \lim\limits_{r \to 0} \Phi_e (r),
$$
where $\Phi_e (r)=\Phi(r, (D_eu)_{+}, (D_eu)_-, \zeta_R, z^0)$ with the functional $\Phi$ defined by the formula (\ref{rescaled functional}), and $\zeta_R(x)=\xi_{R, x^0}(|x|)$. On the other hand, according to Fact~\ref{monotonicity formula} and Remark~\ref{rescaled-monotonicity-formula} after that, we have for $0<r \leqslant R:=\sqrt{t^0}$ the inequality
$$
\Phi_e(r) \leqslant \Phi_e (R)
+\frac{N(n)}{R^{2n+8}}\|(D_eu)_+\|^2_{2, Q_{2R}(z^0)}\|(D_eu)_-\|^2_{2, Q_{2R}(z^0)}.
$$
Application of Corollary~\ref{cor-3.3} and Lemma~\ref{estimate-of-L_2norm-D_eu} enable us to estimate the right-hand side of the last inequality by the known constant. This means that we proved in the cylinder $\mathcal{C}_1$ the $L^{\infty}$-estimate for all the derivatives $D\left( D_eu\right) (z^0)$ with $e \perp \nu$. It is clear that the corresponding estimate of $D_{\nu}\left( D_{\nu}u\right) (z^0)$ can be now deduced from Eq. (\ref{main-equation}). 

So, we establish the $L^{\infty}$-estimates for $D^2u(z^0)$ for all points $z^0\in \Omega^{\pm}\cap \mathcal{C}_1$, and these estimates do not depend on the distance of $z^0$ from the free boundary $\Gamma (u)$. Since \  $|D^2u|=0$\  almost\  everywhere\  on\  $\Lambda (u)$\  and \ the\  $(n+1)$-dimensional Lebesgue measure of the set $\Gamma^*(u)$ equals zero, we get the uniform estimate of the Lipschitz constant for $Du(\cdot,t)$ for each $t\in [0,1]$.


It remains only to observe that the uniform $L^{\infty}$-estimate of $\partial_t u$ were established in (\ref{estimate-for-ut}). This finishes the proof.
\end{proof}

\bibliography{Bibliography(IS)}

\begin{thebibliography}{SUW09}

\bibitem[AU09]{AU09}
D.~E. Apushkinskaya and N.~N. Uraltseva.
\newblock Boundary estimates for solutions to the two-phase parabolic obstacle
  problem.
\newblock {\em J. Math. Sci. (N. Y.)}, 156(4):569--576, 2009.
\newblock Problems in mathematical analysis. No. 38.

\bibitem[CK98]{CK98}
Luis~A. Caffarelli and Carlos~E. Kenig.
\newblock Gradient estimates for variable coefficient parabolic equations and
  singular perturbation problems.
\newblock {\em Amer. J. Math.}, 120(2):391--439, 1998.

\bibitem[CS05]{CSa05}
Luis Caffarelli and Sandro Salsa.
\newblock {\em A geometric approach to free boundary problems}, volume~68 of
  {\em Graduate Studies in Mathematics}.
\newblock American Mathematical Society, Providence, RI, 2005.

\bibitem[GT01]{GTr98}
David Gilbarg and Neil~S. Trudinger.
\newblock {\em Elliptic partial differential equations of second order}.
\newblock Classics in Mathematics. Springer-Verlag, Berlin, 2001.
\newblock Reprint of the 1998 edition.

\bibitem[LU68]{LU68}
Olga~A. Ladyzhenskaya and Nina~N. Uraltseva.
\newblock {\em Linear and quasilinear elliptic equations}.
\newblock Translated from the Russian by Scripta Technica, Inc. Translation
  editor: Leon Ehrenpreis. Academic Press, New York, 1968.

\bibitem[NPP10]{NyPaPo2010}
Kaj Nystr{\"o}m, Andrea Pascucci, and Sergio Polidoro.
\newblock Regularity near the initial state in the obstacle problem for a class
  of hypoelliptic ultraparabolic operators.
\newblock {\em J. Differential Equations}, 249(8):2044--2060, 2010.

\bibitem[NU11]{NU2011}
A.~I. Nazarov and N.~N. Uraltseva.
\newblock The {H}arnack inequality and related properties of solutions of
  elliptic and parabolic equations with divergence-free lower-order
  coefficients.
\newblock {\em Algebra i Analiz}, 23(1):136--168, 2011.

\bibitem[Nys08]{Ny08}
Kaj Nystr{\"o}m.
\newblock On the behaviour near expiry for multi-dimensional {A}merican
  options.
\newblock {\em J. Math. Anal. Appl.}, 339(1):644--654, 2008.

\bibitem[Sha08]{S2008}
Henrik Shahgholian.
\newblock Free boundary regularity close to initial state for parabolic
  obstacle problem.
\newblock {\em Trans. Amer. Math. Soc.}, 360(4):2077--2087, 2008.

\bibitem[SUW09]{SUW09}
Henrik Shahgholian, Nina Uraltseva, and Georg~S. Weiss.
\newblock A parabolic two-phase obstacle-like equation.
\newblock {\em Adv. Math.}, 221(3):861--881, 2009.

\bibitem[Ura07]{U2007}
N.~N. Uraltseva.
\newblock Boundary estimates for solutions of elliptic and parabolic equations
  with discontinuous nonlinearities.
\newblock In {\em Nonlinear equations and spectral theory}, volume 235--246 of
  {\em Amer. Math. Soc. Transl. Ser. 2}, pages 235--246. Amer. Math. Soc.,
  Providence, RI, 2007.

\end{thebibliography}
\end{document}